\documentclass[letterpaper, 10 pt, conference]{cls/ieeeconf}  
\IEEEoverridecommandlockouts                              
\makeatletter
\newcommand{\linebreakand}{%
  \end{@IEEEauthorhalign}\hfill\mbox{}\par\mbox{}\hfill\begin{@IEEEauthorhalign}}
\makeatother

\usepackage[table,dvipsnames]{xcolor}      
\usepackage[noadjust]{cite}

\usepackage{amsmath,amssymb,amsfonts,amsthm,dsfont,mathtools,graphicx,tabularx,adjustbox}
\graphicspath{{./fig/}}
\usepackage[font=small]{caption}
\usepackage[font=footnotesize]{subcaption}
\usepackage{balance}
\usepackage[breaklinks=true, colorlinks, bookmarks=true]{hyperref} 

\newtheorem{proposition}{Proposition}
\newtheorem{lemma}{Lemma}
\newtheorem{theorem}{Theorem}
\theoremstyle{definition}

\newtheorem*{problem*}{Problem}

\newtheorem*{remark*}{Remark}
\newtheorem{assum}{Assumption} 

\newcommand{\bbR}{\mathbb{R}}

\DeclareMathOperator*{\diag}{diag}

\title{\LARGE \bf
Safe Trajectory Tracking of the Stefan Problem with Second-Order Moving Boundary Dynamics
}

\author{Shumon Koga, Miroslav Krstic
\thanks{S. Koga is with the Department
of Computer Science and Systems Engineering at Kobe University,
Hyogo, Japan (e-mail: koga@harbor.kobe-u.ac.jp).} 
\thanks{M. Krstic is with the Department
of Mechanical and Aerospace Engineering, University of California at San Diego, La Jolla,
CA, 92093-0411 USA (e-mail: krstic@ucsd.edu).}
\thanks{This work was supported by JSPS KAKENHI Grant Number JP25058336.}}

\begin{document}

\maketitle
\thispagestyle{empty}
\pagestyle{empty}

\begin{abstract}
This paper considers a safe trajectory tracking of the Stefan problem with a second-order moving boundary dynamics. The model is given by a parabolic Partial Differential Equation (PDE) defined on a time-varying domain of moving boundary governed by a second-order Ordinary Differential Equation (ODE) associated with the Neumann boundary condition. A feedforward control is designed by a series expansion approach to solve the inverse Stefan problem under given reference trajectory of the moving boundary, and the convergence of infinite series is proven. A trajectory tracking controller is derived based on an energy-shaping, which ensures the safety of the model constraint in the closed-loop system. The closed-loop system is also shown to be globally exponentially stable with respect to the tracking error by performing PDE backstepping transformation and Lyapunov analysis. Numerical simulation illustrates an effective tracking performance of the proposed method under a sinusoidal reference trajectory. Code is released at \url{https://github.com/shumon0423/StefanTracking_ACC2026.git}.  
\end{abstract}


\section{INTRODUCTION} \label{sec:introduction}

The Stefan problem is a classical mathematical model representing physical phenomena of thermal phase change, such as melting or solidification, which is widely studied in many applications, including sea ice melting \cite{si2022coupled}, additive manufacturing \cite{wang2021closed}, continuous casting \cite{belhamadia2023numerical}, electrosurgery \cite{ran2024heat}, and cell therapy \cite{srisuma2025simulation}. The Stefan problem comprises a partial differential equation (PDE) for the temperature profile, defined over a time-varying spatial domain whose moving boundary is governed by an ordinary differential equation (ODE). This modeling approach, where the spatial domain grows or shrinks, is also known as a free boundary problem and has been incorporated even beyond thermal dynamics for applications such as modeling chemical reactions in batteries \cite{pozzato2024accelerating}, biological cell growth \cite{francis2025spatial}, and option pricing \cite{nwankwo2024deep}.

Although the Stefan problem has been studied for more than a century, the geometric nonlinearity arising from the coupling between the PDE and the moving boundary presents an inevitable challenge. Consequently, ongoing work has focused on studying the solution of the Stefan problem using state-of-the-art methods such as Bayesian optimization \cite{winter2023multi} and physics-informed machine learning \cite{wang2021deep}. Control design for the Stefan problem has been developed over the last two decades using several approaches, such as optimal control \cite{Hinze07}, model predictive control \cite{ecklebe2021model}, geometric control \cite{maidi2014}, and enthalpy-based control \cite{petrus12}. The PDE backstepping method, a systematic approach for designing boundary controllers and observers for many classes of PDEs and delay systems
as summarized in a recent survey paper \cite{vazquez2026backstepping}, 
has been successfully applied to the Stefan problem, ensuring global exponential stability \cite{Shumon19journal} and validated in physical experiments \cite{koga2020energy}. Several extensions of the backstepping design for the Stefan problem are provided in \cite{KKbook2021}. Other classes of PDEs with moving boundaries have also been handled by the backstepping method, including traffic shockwaves \cite{yu2020bilateral}, piston dynamics \cite{buisson2018control}, and neuron growth \cite{demir2024neuron}.

Trajectory tracking for the Stefan problem, which aims to follow a desired behavior of the moving boundary, has also been studied in a few works. A pioneering work on motion planning by series expansion \cite{dunbar2003motion} provides a solution for the reference temperature of a nonlinear Stefan problem for an a priori given moving boundary as a function of time. Trajectory tracking control designs to stabilize the Stefan system at a reference solution have been proposed using enthalpy-based feedback \cite{petrus2022solid} and a flatness-based approach \cite{ecklebe2021toward}. Moreover, \cite{galvao2022extremum} proposed an extremum-seeking control for the Stefan problem to drive the system to an unknown optimum by tracking a reference trajectory that includes sinusoidal perturbations on a cost map. However, in all those works, stability analysis has been performed under assumptions on either closed-loop performance or a static reference. The reasons for these assumptions are twofold: first, the requirement that a physical condition being satisfied under closed-loop control, and second, the spatial domain discrepancy between the system and reference profiles. As shown in \cite{ecklebe2021toward}, once the reference error PDE state is defined to maintain consistent conditions at the moving boundary, the error dynamics includes a time derivative of the reference state at the boundary. This term cannot be bounded in standard $L_2$ or $H_1$ norms for the classical Stefan problem with first-order moving boundary dynamics, as the velocity gap is equivalent in norm to the Neumann boundary value.

Ensuring that constraints are satisfied in control systems has recently been studied under the notion of safety and Control Barrier Function (CBF) \cite{ames2016control}. 
For trajectory tracking, combining a CBF-based safety filter with a preset planning and low-level control architecture was addressed in \cite{agrawal2024gatekeeper}. Based on an energy-shaping approach and passivity \cite{ortega2002putting}, an energy-based safety constraint and its associated CBF to preserve passivity were proposed in \cite{singletary2021safety} for handling kinematic constraints in robotic systems. Several extensions combining passivity with CBF-based safety have since been developed \cite{capelli2022passivity,califano2023passivity}.

Safe or constrained control of PDE systems has also been studied in several works. Early work by \cite{dubljevic2006predictive} proposed a model predictive control approach that incorporates state and input constraints through a modal decomposition of a parabolic PDE. Input constraints under saturation for unstable parabolic PDEs have been handled using linear matrix inequalities \cite{mironchenko2020local,yang2018output}. A Lyapunov-based constraint guarantee for PDEs has been addressed for fluid dynamics \cite{karafyllis2022spill} and batteries \cite{roy2024input}. The safe control design by a high-order CBF for PDEs was initially proposed for the Stefan problem with actuator dynamics in \cite{koga2023safe} as well as an event-triggered mechanism \cite{koga2023event}, and later for hyperbolic PDEs in \cite{wang2024safe}. A learning-based approach for the safe control of PDEs was proposed in \cite{hu2025safe} using a neural operator \cite{bhan2023neural}. 

This paper is the first to provide a rigorous proof of safety and tracking stability of the Stefan problem with a trajectory tracking control. We consider the Stefan problem with second-order moving boundary dynamics, proposed in \cite{koga2025cdc}, to represent thermal inertia in the phase change dynamics. The feedforward control is designed following the series expansion method, and convergence of the infinite series under the second-order moving boundary dynamics is also shown. The trajectory tracking control is then designed via energy-shaping with respect to the energy function of the reference solution, and the safety condition is proven to hold by maximum principle. Based on the reference PDE state proposed in \cite{ecklebe2021toward}, the stability analysis in the tracking error is employed using a Lyapunov approach and a PDE backstepping transformation. Numerical simulation is performed with a sinusoidal reference trajectory.
\section{PROBLEM STATEMENT} \label{sec:problem}

\begin{figure}[t]
\centering
\includegraphics[width=0.7\linewidth]{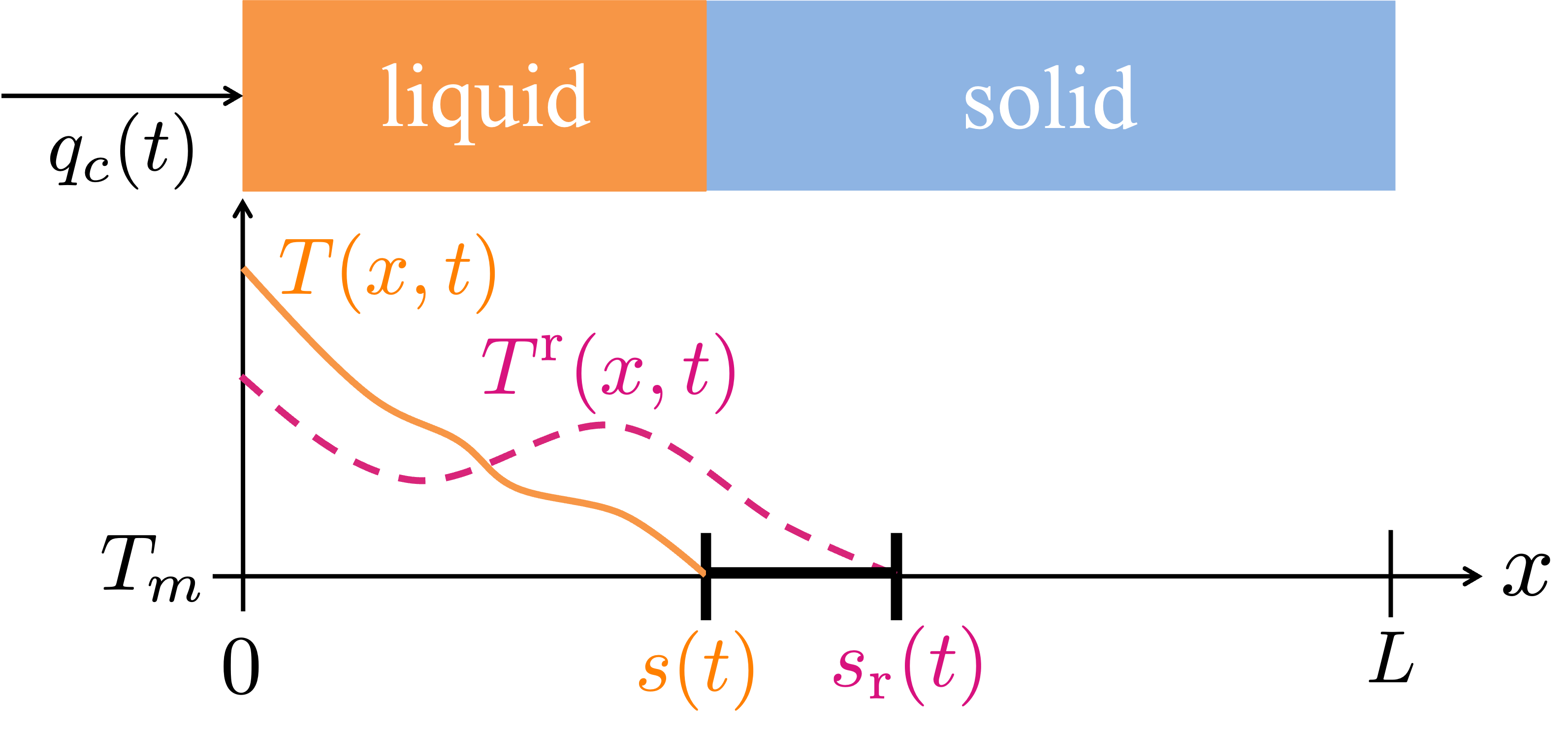}\\
\caption{Schematic of Stefan problem and a reference trajectory.}
\label{fig:stefan}
\end{figure}

\subsection{Stefan Problem with Second-Order Moving Boundary}


We consider the one-phase Stefan problem modeling the dynamics of the temperature profile $T(x,t)$ defined on the time-varying domain $x \in [0, s(t)]$ at time $t \geq 0$, described by  (see  Fig.~\ref{fig:stefan} for schematic interpretation)
\begin{align}\label{eq:stefanPDE}
T_t(x,t)&=\alpha T_{xx}(x,t), \hspace{2mm}  \textrm{for} \hspace{2mm} t > 0, \hspace{2mm} 0< x< s(t), \\ 
\label{eq:stefancontrol}
-k T_x(0,t)&=q_{\rm c}(t),  \hspace{2mm} \textrm{for} \hspace{2mm} t >0,\\ \label{eq:stefanBC}
T(s(t),t)&= T_{\rm m}, \hspace{2mm} \textrm{for} \hspace{2mm} t >0, 
\end{align}
with initial conditions 
$
 s(0) =  s_0$ and $T(x,0) = T_0(x), \forall x \in (0, s_0], $
where $\alpha>0$, $k>0$, $T_{\rm m}$ denote the diffusion coefficient, thermal conductivity, and the melting temperature, respectively. 
The classical one-phase Stefan problem, studied widely in literature including \cite{Shumon19journal}, yields the dynamics of the moving boundary as a first-order response with respect to the heat flux at the moving boundary.
Following \cite{koga2025cdc}, we consider the second-order response represented by 
\begin{align}
\label{eq:stefanODE}
\varepsilon \ddot s(t) & = - \dot s(t) - \beta T_x(s(t),t) , 
\end{align}
and $
\dot s(0) = v_0$,
where $\varepsilon>0$ denotes the relaxation time 
in phase transformation and $\varepsilon \ddot s(t)$ represents the inertia of thermal process. 

The validity of the Stefan model \eqref{eq:stefanPDE}-\eqref{eq:stefanODE} are justified only when the following two conditions are met:  
\begin{align}\label{eq:temp-valid}
T(x,t) \geq& T_{{\rm m}}, \quad  \forall x\in(0,s(t)), \quad \forall t>0, \\
\label{eq:interface-valid}0 < s(t)<  &L, \quad \forall t>0. 
\end{align}
The first condition implies that the liquid phase is not frozen from the heat inlet. The second condition implies that the material is not entirely in one phase. These physical conditions are also required for the existence and uniqueness of solutions under the following assumptions. 

\begin{assum}\label{ass:initial} 
$0 < s_0 < L$, $T_0(x) \in C^1([0, s_0];[T_{\rm m}, +\infty))$ with $T_0(s_0) = T_{\rm m}$.
 \end{assum}
 \begin{assum}\label{ass:CBFinitial}
     $v_0 \geq 0$. 
 \end{assum}

We prove the following lemma. 
 \begin{lemma}\label{lem1}
With Assumptions \ref{ass:initial}-\ref{ass:CBFinitial}, if $q_{\rm c}(t)$ is a bounded piecewise continuous non-negative heat function, i.e.,
\begin{align} \label{eq:qct-valid}
q_{{\rm c}}(t) \geq 0,  \quad \forall t\geq 0,  
\end{align}
then there exists a unique classical solution for the Stefan problem \eqref{eq:stefanPDE}--\eqref{eq:stefanODE}, which satisfies \eqref{eq:temp-valid}, and 
\begin{align} \label{eq:sdot-pos} 
    \dot s(t) \geq 0, \quad \forall t \geq 0. 
\end{align}

\end{lemma}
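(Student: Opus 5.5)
We will argue on the maximal interval of existence of a local classical solution. We take local existence and uniqueness for granted from the standard theory of one-phase Stefan problems: after fixing the domain by $\xi = x/s(t)$, one gets a parabolic equation with Lipschitz coefficient dependence on $(s,\dot s)$, coupled to the ODE \eqref{eq:stefanODE}. This is the setting of \cite{koga2025cdc} and of the classical fixed-point arguments used for the first-order case in \cite{Shumon19journal,KKbook2021}. The argument then reduces to two sign properties, $T\ge T_{\rm m}$ and $\dot s\ge 0$, which hold as long as the solution exists and do not degenerate. Continuation of the solution then follows as long as $s$ stays in $(0,L)$, which is the regime the lemma concerns.

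\emph{Step 1 (temperature).} We apply the maximum principle to $T$ on the noncylindrical domain $\{0<x<s(t)\}$. By Assumption~\ref{ass:initial}, $T_0\ge T_{\rm m}$, and \eqref{eq:stefanBC} fixes $T=T_{\rm m}$ at $x=s(t)$. Suppose $\min T<T_{\rm m}$ on some $[0,t_1]$. The minimum cannot be attained at the initial time or on the moving boundary, and the parabolic strong maximum principle excludes an interior minimum. It therefore lies at $x=0$, and Hopf's lemma gives $T_x(0,t)>0$, i.e.\ $q_{\rm c}(t)<0$, contradicting \eqref{eq:qct-valid}. Hence \eqref{eq:temp-valid} holds. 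Applying Hopf's lemma at the right boundary, where $T$ attains its minimum $T_{\rm m}$, also gives $T_x(s(t),t)\le 0$ for all $t$.

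\emph{Step 2 (velocity).} The ODE \eqref{eq:stefanODE} is linear in $\dot s$ with forcing $-\beta T_x(s(t),t)/\varepsilon$. By variation of constants,
\begin{align*}
\dot s(t) = e^{-t/\varepsilon} v_0 - \frac{\beta}{\varepsilon}\int_0^t e^{-(t-\tau)/\varepsilon}\, T_x(s(\tau),\tau)\, d\tau .
\end{align*}
With $v_0\ge 0$ (Assumption~\ref{ass:CBFinitial}), $\beta>0$ and $T_x(s,\cdot)\le 0$ from Step 1, both terms are nonnegative. This gives \eqref{eq:sdot-pos}. In particular $s(t)\ge s_0>0$, so the domain never collapses.

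The main obstacle is the rigor of Step 1 on a moving domain with only piecewise continuous $q_{\rm c}$. At switching times the solution is only piecewise classical, and Hopf's lemma needs regularity up to $x=0$ and near the moving corner $(s(t),t)$. We would handle this by working on each interval of continuity of $q_{\rm c}$, using the $C^1$ compatibility of $T_0$ and continuity of $T$ across switching instants, and applying the weak maximum principle in the transformed coordinates $\xi=x/s(t)$. In those coordinates the extra drift term $\xi\dot s T_\xi/s$ is harmless for the maximum principle. Uniqueness follows from the same contraction argument that gives local existence, once the a priori bounds $T\ge T_{\rm m}$ and $\dot s\ge 0$ are available.
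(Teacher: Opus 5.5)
Your proposal is correct and follows essentially the same route as the paper: the maximum principle with Hopf's lemma gives $T\ge T_{\rm m}$ and hence $T_x(s(t),t)\le 0$, after which the linear ODE for $\dot s$ (your variation-of-constants formula, the paper's Gronwall step) yields $\dot s(t)\ge v_0 e^{-t/\varepsilon}\ge 0$. As in the paper, existence and uniqueness of the classical solution are taken from the literature rather than proved.
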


\begin{proof}
By applying the maximum principle for parabolic PDEs and Hopf's lemma, it holds that $T_{x}(s(t),t) \leq 0$, which leads to 
 $   \varepsilon \ddot s(t) \geq - \dot s(t). $
Applying the Gronwall's inequality yields $
    \dot s(t) \geq v_0 e^{ - \varepsilon^{-1} t}$. 
Thus, with Assumption \ref{ass:CBFinitial}, \eqref{eq:sdot-pos} holds

\end{proof}

\subsection{Safe Trajectory Tracking}

The objective is to develop a boundary control $q_{\rm c}(t)$ acting in \eqref{eq:stefancontrol} to track a given reference trajectory $s_{\rm r}(t)$ in the moving boundary. As the boundary control is fed into the temperature dynamics, the reference temperature profile $T^{\rm r}(x,t)$ needs to be solved, which must satisfy
\begin{align}\label{eq:reference_PDE}
T^{\rm r}_t(x,t)&=\alpha T^{\rm r}_{xx}(x,t), \hspace{1mm}  \textrm{for} \hspace{1mm} t > 0, \hspace{1mm} 0< x< s_{\rm r}(t), \\ 
\label{eq:reference_BC1}
-k T^{\rm r}_x(0,t)&=q^r_{\rm c}(t),  \hspace{2mm} \textrm{for} \hspace{2mm} t >0,\\ \label{eq:reference_BC2}
T^{\rm r}(s_{\rm r}(t),t)&= T_{\rm m}, \hspace{2mm} \textrm{for} \hspace{2mm} t >0, \\
\label{eq:reference_ODE} \varepsilon \ddot s_{\rm r}(t) & = - \dot s_{\rm r}(t) - \beta T^{\rm r}_x(s_{\rm r}(t),t) , 
\end{align}
where $q^r_{\rm c}(t)$ serves as a feedforward control input to be designed. The safe trajectorty tracking control problem we tackle in this paper is addressed below. 

\textbf{Problem:} Given states $(T(x,t), s(t))$ governed by \eqref{eq:stefanPDE}--\eqref{eq:stefanODE} and the reference trajectory $s_{\rm r}(t)$, 
design a safe trajectory tracking controller for $q_{\rm c}(t)$ 
so that the closed-loop system satisfies the constraint \eqref{eq:temp-valid} and $ (T(x,t), s(t)) \to (T^{\rm r}(x,t), s_{\rm r}(t))$ as a reference solution to \eqref{eq:reference_PDE}--\eqref{eq:reference_ODE}.

\section{CONTROL DESIGN} \label{sec:methods}

\subsection{Feedforward Control}

Obtaining the temperature profile solution $T^{\rm r}(x,t)$ for a known moving boundary $s_{\rm r}(t)$ is known as the inverse Stefan problem~\cite{gol2012inverse}. Once the solution to the inverse Stefan problem is obtained, the feedforward control $q_{\rm c}^r(t)$ can be derived from the boundary condition in \eqref{eq:reference_BC1}. Among several approaches, we employ the series expansion method in \cite{dunbar2003motion}. Namely, we formulate the reference temperature profile as
\begin{align} \label{eq:feedforward_series}
    T^{\rm r}(x,t) = T_{\rm m} + \sum_{n = 0}^{\infty} \frac{a_n(t)}{n !} (x - s_{\rm r}(t))^n,
\end{align}
where $a_n(t)$ for $n = 0, 1, \dots$ are time-varying coefficients to be determined. Substituting \eqref{eq:feedforward_series} into the boundary conditions \eqref{eq:reference_BC2} and \eqref{eq:reference_ODE} provides the following initial coefficients:
\begin{align} \label{eq:series_initial}
    a_0(t) = 0, \quad a_1(t) = -\frac{1}{\beta} (\varepsilon \ddot s_{\rm r}(t) + \dot s_{\rm r}(t)).
\end{align}
Moreover, substituting \eqref{eq:feedforward_series} into the reference PDE \eqref{eq:reference_PDE} gives a recursive update for $a_n(t)$ where $n \geq 2$:
\begin{align} \label{eq:series_recursion}
 a_{n}(t) = \frac{1}{\alpha} (\dot a_{n-2}(t) - \dot s_{\rm r}(t) a_{n-1}(t) ).
\end{align}
Similarly to \cite{dunbar2003motion}, to ensure the convergence of the series solution, we impose the following assumption.

\begin{assum} \label{ass:gevrey}
The reference trajectory $s_{\rm r}(t)$ is Gevrey of order $(d-1)$ for some $d \in [1, 2]$, i.e., there exist constants $M>0$ and $R>0$ such that the following bound holds
\begin{align} \label{eq:reference_bound}
 \big| s_{\rm r}^{(m+1)}(t) \big| \leq M \frac{m!^d}{R^m}
\end{align}
for all $m \in \mathbb{Z}_+$, where $s_{\rm r}^{(m)}(t) := \frac{{\rm d}^m}{{\rm d} t^m} s_{\rm r}(t)$.
\end{assum}

We prove the following proposition.
\begin{proposition} \label{prop:series}
With Assumption \ref{ass:gevrey}, it holds
\begin{align} \label{eq:series_bound}
    \big|a_n^{(m)}(t) \big | \leq M F^{n-1} G  H_{n, m}
\end{align}
where $G := \frac{1}{\beta}\left( \varepsilon + R  \right)$, $
 H_{n,m}  = \frac{ 1}{R^{n+m}} \frac{ ((n + m) !)^d }{(n !)^{d-1} }$, and
\begin{align}
 F & = \frac{ R M + \sqrt{R^2 M^2 + 16 \alpha R }}{4 \alpha}. \label{eq:series_A_def} \end{align} 
\end{proposition}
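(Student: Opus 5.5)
Proof is by strong induction on $n$, with the claim for each $n$ holding for all $m\ge 0$. The two base cases $n=0,1$ come from \eqref{eq:series_initial}. The inductive step uses the recursion \eqref{eq:series_recursion} differentiated $m$ times:
\begin{align*}
a_n^{(m)} = \frac{1}{\alpha}\Big( a_{n-2}^{(m+1)} - \sum_{j=0}^{m} \binom{m}{j} s_{\rm r}^{(j+1)} a_{n-1}^{(m-j)} \Big).
\end{align*}
The case $n=0$ is trivial, since $a_0\equiv 0$. For $n=1$ we need $|a_1^{(m)}|\le MG\,(m+1)!^d/R^{m+1}$. By Assumption \ref{ass:gevrey},
\begin{align*}
|a_1^{(m)}|\le \frac{1}{\beta}\left(\varepsilon M\frac{(m+1)!^d}{R^{m+1}}+M\frac{m!^d}{R^m}\right).
\end{align*}
Since $m!^d/R^m\le R\,(m+1)!^d/R^{m+1}$, this is at most $\frac{M}{\beta}(\varepsilon+R)(m+1)!^d/R^{m+1}$, which matches $G$ exactly.

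For $n\ge2$, insert the inductive bounds for $a_{n-2}$ (with $m+1$) and $a_{n-1}$ (with $m-j$), together with \eqref{eq:reference_bound}. The first term gives
\begin{align*}
\frac{1}{\alpha}MF^{n-3}G\,\frac{(n+m-1)!^d}{R^{n+m-1}(n-2)!^{d-1}}.
\end{align*}
Comparing with the target $MF^{n-1}GH_{n,m}$ leaves the ratio
\begin{align*}
\frac{R}{\alpha F^2}\left(\frac{n(n-1)}{(n+m)^d}\right)^{d-1}\cdot\frac{1}{(n+m)^{\,\cdot}}\ldots
\end{align*}
Concretely, $\frac{(n+m-1)!^d\, n!^{d-1}}{(n+m)!^d\,(n-2)!^{d-1}}=\frac{(n(n-1))^{d-1}}{(n+m)^d}\le 1$ for $d\in[1,2]$; this is where $d\le2$ enters. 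So the first term contributes at most $\frac{R}{\alpha F^2}$ times the target.

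The second term gives
\begin{align*}
\frac{1}{\alpha}\sum_j\binom{m}{j}M\frac{j!^d}{R^j}\,MF^{n-2}G\,\frac{(n+m-j-1)!^d}{R^{n+m-j-1}(n-1)!^{d-1}}.
\end{align*}
Here I would use $\binom{m}{j}j!^d(n+m-j-1)!^d\le \big(j!\,(n+m-j-1)!\big)^{d-1}\,m!\,(n+m-j-1)!/(m-j)!$. Then I would aim for the estimate $\sum_{j}\binom{m}{j}j!^d (n-1+m-j)!^d\le C\,(n+m)!^d/n^{\,\cdot}$. The standard trick, as in \cite{dunbar2003motion}, is $\binom{m}{j}j!(n-1+m-j)!\le (n-1+m)!$, which turns each summand into at most $(n+m-1)!^d$ times $\big(\tfrac{j!(n+m-1-j)!}{(n+m-1)!}\big)^{d-1}\le1$. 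Summing the $m+1$ terms gives at most $(m+1)(n+m-1)!^d\le (n+m)!^d/(n+m)^{d-1}$. Then I multiply by $n!^{d-1}/(n-1)!^{d-1}=n^{d-1}\le (n+m)^{d-1}$. So the second term contributes at most $\frac{RM}{\alpha F}$ times the target, where the extra factor $R$ comes from $R^{n+m}/R^{n+m-1}$.

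It therefore suffices that $\frac{R}{\alpha F^2}+\frac{RM}{\alpha F}\le 1$, i.e.\ $\alpha F^2-RMF-R\ge0$. Its positive root is $\frac{RM+\sqrt{R^2M^2+4\alpha R}}{2\alpha}$. The $F$ in \eqref{eq:series_A_def} is the root of $2\alpha F^2-RMF-2R=0$, i.e.\ it allows a factor $2$ slack in each term. That slack is exactly what one needs if the combinatorial estimates above lose a factor up to $2$, for instance bounding the sum by $2(n+m)!^d/\ldots$ rather than $(n+m)!^d$. So I would check at the end that the defining quadratic of $F$ dominates the needed inequality; one verifies $\alpha F^2-RMF-R\ge \tfrac12(2\alpha F^2-RMF-2R)=0$, which holds.

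The main obstacle is the combinatorial estimate of the Leibniz sum. One has to handle the mixed $d$-powers and the $(n!)^{d-1}$ normalization carefully so that the bound is uniform in both $n$ and $m$. This requires $d\le 2$ in the $a_{n-2}$ term. I would first try the inequality $\binom{m}{j}j!(n+m-1-j)!\le(n+m-1)!$ combined with $(m+1)\le(n+m)$, and fall back on a geometric-type sum if a factor is lost.
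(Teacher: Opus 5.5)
\textbf{The inductive step does not close with the paper's $F$.} Your base cases and your estimate of the $a_{n-2}$ term are correct. The problem is the final verification, which contains an algebra slip. At the $F$ in \eqref{eq:series_A_def} one has $2\alpha F^2 = RMF + 2R$, so $\alpha F^2 - RMF - R = -\tfrac12 RMF < 0$. Equivalently, $\frac{R}{\alpha F^2} + \frac{RM}{\alpha F} = \frac{R + RMF}{R + RMF/2} > 1$.

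Your ``factor-2 slack'' reading is backwards. The paper's quadratic says $\frac{R}{\alpha F^2} + \frac{RM}{2\alpha F} = 1$, which is \emph{tighter} than your sufficient condition in the $M$-term, not looser. As written, your argument proves the bound only with the strictly larger constant $F' = (RM + \sqrt{R^2M^2 + 4\alpha R})/(2\alpha)$.

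\textbf{Where the factor is lost.} The loss is in your Leibniz-sum estimate. You bound each of the $m+1$ summands by $((n+m-1)!)^d$, then use $m+1 \le n+m$ and $n^{d-1} \le (n+m)^{d-1}$. This discards a factor $n$. Already at $m=0$, the bound you get from the induction hypothesis is $\frac{RM}{\alpha F n}$ times the target, but your estimate records it as $\frac{RM}{\alpha F}$.

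\textbf{How to fix it.} Use the exact hockey-stick identity
\[
\sum_{j=0}^m \binom{m}{j} j!\,(n-1+m-j)! = m!\,(n-1)!\sum_{k=0}^m \binom{n-1+k}{k} = \frac{(n+m)!}{n}.
\]
Apply it to one copy of the summand, and use $j!\,(n-1+m-j)! \le (n+m-1)!$ only on the remaining $(d-1)$-th power. This shows the second term is at most $\frac{RM}{\alpha F n}\left(\frac{n}{n+m}\right)^{d-1} \le \frac{RM}{\alpha F n}$ times the target.

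Combine this with the sharper first-term factor $\frac{(n(n-1))^{d-1}}{(n+m)^d} \le \frac{n-1}{n}$. You then obtain the paper's bound $\frac{R}{\alpha F^2}\left(\left(1-\frac1n\right) + \frac{MF}{n}\right) \le \frac{R}{\alpha F^2}\left(1 + \frac{MF}{2}\right)$ for $n \ge 2$. Requiring this to be at most $1$ is exactly $2\alpha F^2 - RMF - 2R \ge 0$, whose positive root is \eqref{eq:series_A_def}. Even keeping your cruder first-term factor $\frac{R}{\alpha F^2}$ would suffice once the $1/n \le 1/2$ is recovered in the second term.
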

\begin{proof}
We prove \eqref{eq:series_bound} holds for all $m$ by induction on $n = 0, 1, \dots$. The detailed steps of this proof follow the methodology in \cite{dunbar2003motion}. For the base case $n=0$, the inequality holds for all $m$ since $a_0(t) = 0$ from \eqref{eq:series_initial}. For $n=1$, taking the $m$-th derivative of the second condition of \eqref{eq:series_initial} and applying \eqref{eq:reference_bound} leads to
\begin{align}
    | a_1^{(m)}(t) | 
    & \leq M G \frac{ (m+1)!^d}{ R^{m+1}},
\end{align}
thus, \eqref{eq:series_bound} holds for $n=1$.

Next, we prove \eqref{eq:series_bound} for $n\geq 2$, assuming as the induction hypothesis that it holds for $n-1$ and $n-2$. Taking the bound of \eqref{eq:series_recursion} and applying the triangle inequality yields
\begin{align}\label{eq:series_bound_step1}
    \big|a_n^{(m)}(t) \big | \leq \frac{1}{\alpha} \left( \big|  a_{n-2}^{(m+1)}(t) \big | + \big |(\dot s_{\rm r} a_{n-1} )^{(m)} \big| \right).
\end{align}
Following the derivations in \cite{dunbar2003motion} by applying the induction hypothesis to the terms on the right-hand side, we derive the following bound:
\begin{align}
   \big|a_n^{(m)}(t) \big  | \leq & M F^{n-3} G H_{n,m} \frac{R}{\alpha} \left( \left( 1 - \frac{1}{n} \right) +  \frac{ M F }{ n} \right).
\end{align}
Finally, by choosing $F$ such that the final factor equals 1, we obtain a quadratic equation for $F$. The unique positive solution is given by \eqref{eq:series_A_def}, which completes the proof.
\end{proof}

\begin{proposition} \label{prop:reference_bound}
The radius of convergence of the infinite series \eqref{eq:feedforward_series} is given by $R / F$. Namely, if
\begin{align} \label{eq:convergence}
    R \geq F \sup_{t \geq 0} |s_{\rm r}(t)|,
\end{align}
then the series \eqref{eq:feedforward_series} converges, and it holds:
\begin{align} \label{eq:Tr_bound}
     | T^{\rm r}(x,t) - T_{\rm m} | & \leq  \frac{M G R}{F \left( R  - F (s_{\rm r}(t) - x)\right)}, \\
     \label{eq:Trx_bound}  |  T^{\rm r}_x(x,t) | & \leq \frac{MGR }{(R - F(s_{\rm r}(t) - x))^2}.
\end{align}
\end{proposition}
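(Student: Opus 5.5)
The plan is to specialize Proposition~\ref{prop:series} to $m=0$ and compare the series \eqref{eq:feedforward_series} termwise with a geometric series in $F(s_{\rm r}(t)-x)/R$. For $m=0$ we have $H_{n,0} = \frac{(n!)^d}{R^n (n!)^{d-1}} = \frac{n!}{R^n}$, so \eqref{eq:series_bound} gives
\begin{align*}
\frac{|a_n(t)|}{n!}\,|x-s_{\rm r}(t)|^n \le M F^{n-1} G \frac{|x - s_{\rm r}(t)|^n}{R^n} = \frac{MG}{F}\left(\frac{F (s_{\rm r}(t)-x)}{R}\right)^n ,
\end{align*}
using $0\le x\le s_{\rm r}(t)$. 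Summing over $n\ge 0$ (the $n=0$ term vanishes since $a_0=0$, which only strengthens the bound) gives absolute convergence whenever $F(s_{\rm r}(t)-x)<R$. By the Cauchy--Hadamard comparison, the radius of convergence in the variable $x - s_{\rm r}(t)$ is therefore at least $R/F$. Since $s_{\rm r}(t)-x\le s_{\rm r}(t)$ on the domain, condition \eqref{eq:convergence} places every $x\in[0,s_{\rm r}(t)]$ inside this disc. The geometric sum then gives
\begin{align*}
\frac{MG}{F}\cdot\frac{1}{1-F(s_{\rm r}(t)-x)/R} = \frac{MGR}{F\,(R-F(s_{\rm r}(t)-x))},
\end{align*}
which is exactly \eqref{eq:Tr_bound}.

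For \eqref{eq:Trx_bound}, I would differentiate the series termwise in $x$. This is legitimate because power series can be differentiated termwise inside the disc of convergence. The result is
\begin{align*}
T^{\rm r}_x = \sum_{n\ge1}\frac{a_n(t)}{(n-1)!}(x-s_{\rm r}(t))^{n-1}.
\end{align*}
Applying the same bound gives
\begin{align*}
\frac{|a_n|}{(n-1)!}\,|x-s_{\rm r}|^{n-1}\le \frac{MG}{R}\, n\, \rho^{n-1}, \qquad \rho := \frac{F(s_{\rm r}(t)-x)}{R}.
\end{align*}
Summing with $\sum_{n\ge1} n\rho^{n-1} = (1-\rho)^{-2}$ yields
\begin{align*}
\frac{MG}{R(1-\rho)^2} = \frac{MGR}{(R-F(s_{\rm r}-x))^2},
\end{align*}
which matches \eqref{eq:Trx_bound}.

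The main obstacle is the boundary case of \eqref{eq:convergence}. When equality holds at $x=0$, we get $\rho=1$ and the geometric bound diverges, so the estimates are only finite for $\rho<1$. I would handle this by reading \eqref{eq:convergence} as giving convergence on $(0,s_{\rm r}(t)]$ and, if needed, at $x=0$ under strict inequality. Equivalently, the stated bounds hold with the understanding that they are informative exactly where the denominators are positive.

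A secondary point is justifying that the convergent series actually solves \eqref{eq:reference_PDE}--\eqref{eq:reference_ODE}. Termwise time differentiation is needed for this, and I would control it with the $m\ge1$ bounds of Proposition~\ref{prop:series}. The factor $H_{n,1}/H_{n,0}=(n+1)^d/R$ grows only polynomially in $n$, so it does not change the radius of convergence.
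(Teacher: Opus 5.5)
Your proposal is correct and follows the paper's proof. Both set $m=0$ in the coefficient bound of Proposition~\ref{prop:series}, which gives $H_{n,0}=n!/R^n$, then dominate the series and its termwise $x$-derivative by the geometric series in $F(s_{\rm r}(t)-x)/R$ and its derivative, and sum. Your remark on the equality case of \eqref{eq:convergence} is a real subtlety the paper passes over: at $x=0$ the ratio can equal $1$ and the bounds degenerate, whereas the paper simply asserts the ratio is strictly less than $1$.
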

\begin{proof}
The bounds are found by applying the triangle inequality to the series \eqref{eq:feedforward_series} and its spatial derivative, followed by applying the coefficient bound from \eqref{eq:series_bound}. For the first inequality, we have:
\begin{align}
    | T^{\rm r}(x,t) - T_{\rm m} | 
    \leq M G F^{-1} \sum_{n = 0}^{\infty} \left( \frac{ F} {R} ( s_{\rm r}(t) - x) \right)^n.
\end{align}
Under the condition in \eqref{eq:convergence}, the term $\frac{F}{R}(s_{\rm r}(t)-x)$ is less than 1, leading directly to the bound in \eqref{eq:Tr_bound}. The bound \eqref{eq:Trx_bound} is derived similarly.
\end{proof}

Applying the series solution of the reference temperature \eqref{eq:feedforward_series} to the boundary condition \eqref{eq:reference_BC1}, the feedforward control $q^r_{\rm c}(t)$ is obtained analytically as
\begin{align} \label{eq:feedforward}
    q^r_{\rm c} (t) = - k \sum_{n = 0}^{\infty} \frac{a_{n+1}(t)}{n !} ( - s_{\rm r}(t))^n.
\end{align}
The inverse Stefan problem is known to be ill-posed; ensuring that the classical solution $T^{\rm r}(x,t)$ satisfies the physical constraint \eqref{eq:temp-valid} is generally not guaranteed. Hence, we impose the following assumptions to ensure the well-posedness of the reference state.

\begin{assum} \label{ass:reference_trajectory}
The reference trajectory $s_{\rm r}(t)$ satisfies
\begin{align}
    s_{\rm r}(0) >0, \quad \dot s_{\rm r}(t) \geq 0, \quad \lim_{t \to \infty} s_{\rm r}(t) = \bar s_{\rm r} < L.
\end{align}
\end{assum}

\begin{assum} \label{ass:reference_temp}
The series solution \eqref{eq:feedforward_series} satisfies \eqref{eq:temp-valid}.
\end{assum}

\subsection{Energy-Shaping Tracking Control with Safety Guarantee}

Let the total energy be defined as
\begin{align} \label{eq:energy-def}
    E(t) = \int_0^{s(t)} (T(x,t) - T_{\rm m}) dx + \frac{k}{\beta} (\varepsilon \dot s(t) + s(t)).
\end{align}
Taking the time derivative of \eqref{eq:energy-def} along the system's solution yields the energy conservation law $
    \dot E(t) = q_{\rm c}(t)$.
Let $E_{\rm r}(t)$ be the reference energy defined by 
\begin{align} \label{eq:energy_ref-def}
 \hspace{-2.2mm}   E_{\rm r}(t) = &\int_0^{s_{\rm r}(t)} (T^{\rm r}(x,t) - T_{\rm m}) dx + \frac{k}{\beta} (\varepsilon \dot s_{\rm r}(t) + s_{\rm r}(t)).
\end{align}
We design an energy-shaping controller as
\begin{align} \label{eq:tracking_control}
    q_{\rm c}(t) = q_{\rm c}^{\rm r}(t) - c \left( E(t) - E_{\rm r}(t) \right),
\end{align}
where $c>0$ is a control gain and $q_{\rm c}^{\rm r}(t)$ is the feedforward control from \eqref{eq:feedforward}. 
To ensure safety, we impose the following assumption.

\begin{assum} \label{ass:energy}
The initial energy satisfies
\begin{align}
 \label{eq:energy_cond_1}  E(0) \leq E_{\rm r}(0) +  \frac{1}{c} q_{\rm c}^{\rm r}(t) e^{ct}, \quad \forall t \geq 0, \\
\label{eq:energy_cond_2}   E_{\rm r}(t) + (E(0) - E_{\rm r}(0)) e^{-ct} < \frac{k}{\beta} \bar s_{\rm r}, \quad \forall t \geq 0.
\end{align}
\end{assum}

\begin{proposition} \label{prop:safety}
Let Assumptions \ref{ass:initial}--\ref{ass:energy} hold. Then the closed-loop system, consisting of the Stefan dynamics \eqref{eq:stefanPDE}--\eqref{eq:stefanODE}, the tracking control \eqref{eq:tracking_control}, and the feedforward control \eqref{eq:feedforward}, satisfies the safety condition \eqref{eq:temp-valid} and
\begin{align} \label{eq:int-valid}
    s_0 < s(t) \leq \bar s_{\rm r}, \quad \dot s(t) \geq 0, \quad \forall t \geq 0.
\end{align}
\end{proposition}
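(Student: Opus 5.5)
The plan is to reduce everything to Lemma~\ref{lem1}. Lemma~\ref{lem1} says that a non-negative, bounded, piecewise continuous input $q_{\rm c}(t)$ yields a unique classical solution with $T\geq T_{\rm m}$ and $\dot s\geq 0$. So the core task is to show that the closed-loop input \eqref{eq:tracking_control} stays non-negative. Positivity of $q_{\rm c}$ will then give \eqref{eq:temp-valid} and $\dot s\ge 0$. The energy identity, together with \eqref{eq:energy_cond_2}, will then bound $s(t)$.

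\textbf{Step 1 (explicit energy error).} From the conservation law $\dot E=q_{\rm c}$ and the analogous identity $\dot E_{\rm r}=q_{\rm c}^{\rm r}$, obtained by differentiating \eqref{eq:energy_ref-def} along \eqref{eq:reference_PDE}--\eqref{eq:reference_ODE}, the error $\tilde E:=E-E_{\rm r}$ satisfies $\dot{\tilde E}=-c\tilde E$ under \eqref{eq:tracking_control}. Hence
\begin{align*}
E(t)-E_{\rm r}(t)=(E(0)-E_{\rm r}(0))e^{-ct}.
\end{align*}
This is a closed-form expression that does not depend on the PDE solution. Substituting it into \eqref{eq:tracking_control} gives
\begin{align*}
q_{\rm c}(t)=q_{\rm c}^{\rm r}(t)-c(E(0)-E_{\rm r}(0))e^{-ct}.
\end{align*}
Rearranging \eqref{eq:energy_cond_1} gives exactly $q_{\rm c}(t)\ge 0$ for all $t\ge0$. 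Boundedness and continuity of $q_{\rm c}$ follow from the convergent series \eqref{eq:feedforward} under Proposition~\ref{prop:reference_bound}.

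\textbf{Step 2 (invoke Lemma~\ref{lem1}).} Since $q_{\rm c}$ is now a known open-loop function of time, Lemma~\ref{lem1} applies with Assumptions~\ref{ass:initial}--\ref{ass:CBFinitial}. It gives existence and uniqueness, \eqref{eq:temp-valid}, and $\dot s(t)\ge0$. Strict growth $s(t)>s_0$ should follow because $\dot s\geq v_0e^{-t/\varepsilon}$ when $v_0>0$. Otherwise I will use Hopf's lemma, which gives $T_x(s,t)<0$ whenever $T\not\equiv T_{\rm m}$ (guaranteed once $q_{\rm c}>0$ somewhere). Together with \eqref{eq:stefanODE}, this forces $\varepsilon\ddot s+\dot s>0$ and hence $\dot s>0$ for $t>0$. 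I expect this strictness to be the main delicate point, and I may need to settle for $s(t)\geq s_0$ if $q_{\rm c}$ could vanish identically.

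\textbf{Step 3 (upper bound on $s$).} Since $T\ge T_{\rm m}$ and $\dot s\ge0$, the definition \eqref{eq:energy-def} gives $E(t)\ge \frac{k}{\beta}(\varepsilon\dot s+s)\ge\frac{k}{\beta}s(t)$. Combining this with Step 1 and \eqref{eq:energy_cond_2} yields
\begin{align*}
\frac{k}{\beta}s(t)\le E(t)=E_{\rm r}(t)+(E(0)-E_{\rm r}(0))e^{-ct}<\frac{k}{\beta}\bar s_{\rm r}.
\end{align*}
This proves $s(t)<\bar s_{\rm r}<L$ by Assumption~\ref{ass:reference_trajectory}, which also confirms \eqref{eq:interface-valid}.

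A subtle point is that the reference identity $\dot E_{\rm r}=q^{\rm r}_{\rm c}$ requires differentiating the series termwise. This is justified by the derivative bounds of Proposition~\ref{prop:series} and the convergence condition \eqref{eq:convergence}, with Assumption~\ref{ass:reference_temp} ensuring the reference is a physically valid classical solution.
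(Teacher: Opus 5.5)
Your proposal is correct and follows the paper's own argument. The energy error decays as $(E(0)-E_{\rm r}(0))e^{-ct}$, and condition \eqref{eq:energy_cond_1} is exactly $q_{\rm c}(t)\geq 0$, so Lemma~\ref{lem1} gives \eqref{eq:temp-valid} and $\dot s\geq 0$. Then \eqref{eq:energy_cond_2} combined with $E(t)\geq \frac{k}{\beta}s(t)$ bounds the interface. You state this last step in the logically correct direction, using $\dot s\geq 0$ to drop $\varepsilon\dot s$, whereas the paper only phrases it as a requirement.

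Your caution about the strict lower bound is warranted. Since $s(0)=s_0$, the claim $s_0<s(t)$ is false at $t=0$, and the paper's proof does not address this. What can actually be shown is $s(t)\geq s_0$, strict for $t>0$ under the nondegeneracy conditions you identify.
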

\begin{proof}
Substituting the controller \eqref{eq:tracking_control} into the energy conservation law yields a first-order linear ODE for the energy error $E(t) - E_{\rm r}(t)$, whose solution is
\begin{align} \label{eq:energy_decay}
    E(t) - E_{\rm r}(t) = (E(0) - E_{\rm r}(0)) e^{-ct}.
\end{align}
From Lemma \ref{lem1}, \eqref{eq:temp-valid} is guaranteed if $q_{\rm c}(t) \geq 0$. Using \eqref{eq:tracking_control} and \eqref{eq:energy_decay}, this safety requirement becomes $q_{\rm c}^{\rm r}(t) - c(E(0)-E_{\rm r}(0))e^{-ct} \geq 0$, which gives \eqref{eq:energy_cond_1}. Furthermore, the interface constraint $s(t) \leq \bar{s}_{\rm r}$ and non-negativity of the temperature profile require $E(t) \leq \frac{k}{\beta} \bar s_{\rm r}$. Substituting $E(t)$ from \eqref{eq:energy_decay} into this inequality yields \eqref{eq:energy_cond_2}.
\end{proof}

\section{Stability Analysis}

This section presents the main theoretical result: the exponential stability of the closed-loop trajectory tracking error. 

\begin{theorem} \label{thm:stability}
Let Assumptions \ref{ass:initial}--\ref{ass:energy} hold. Consider the closed-loop system of \eqref{eq:stefanPDE}--\eqref{eq:stefanODE} with the tracking control \eqref{eq:tracking_control}. Then, there exist positive constants $c^*, \bar M, b$ such that for any controller gain $c \geq c^*$, it holds $\Phi(t) \leq \bar M \Phi(0) e^{- bt}$, where $
\Phi(t) := \int_0^{s(t)} \big(T(x,t) - T^r(x - (s(t) - s_{\rm r}(t)),t)\big)^2 dx + (s(t) - s_{\rm r}(t))^2+ ( \dot s(t) - \dot s_{\rm r}(t))^2$.
\end{theorem}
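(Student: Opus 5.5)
Following \cite{ecklebe2021toward}, we will work on the shifted domain. Set $X(t) := s(t)-s_{\rm r}(t)$ and define the reference-error state
$u(x,t) := T(x,t) - T^{\rm r}(x - X(t),t)$ on $[0,s(t)]$. The shifted reference $T^{\rm r}(\cdot - X(t),t)$ is evaluated on $[-X(t), s_{\rm r}(t)]$; since $s\le \bar s_{\rm r}$ and $s_0<s$ by Proposition~\ref{prop:safety}, it stays inside the convergence radius of Proposition~\ref{prop:reference_bound} as long as $R \ge F \bar s_{\rm r}$ with a margin. It is well defined there by analytic continuation of the series \eqref{eq:feedforward_series}.

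\textbf{Error system.} We first derive the error dynamics. By construction $u(s(t),t)=0$, and
\[
u_t = \alpha u_{xx} + \dot X(t)\, T^{\rm r}_x(x-X,t),
\]
\[
-k u_x(0,t) = q_{\rm c}(t) - \big(-k T^{\rm r}_x(-X,t)\big).
\]
The ODE becomes
\[
\varepsilon \ddot X = -\dot X - \beta u_x(s(t),t),
\]
since $T^{\rm r}_x(s-X,t)=T^{\rm r}_x(s_{\rm r},t)$. The point of the shift is that, thanks to the second-order boundary dynamics, $\dot X$ is now a state component and no longer equals a Neumann trace. Hence the nonhomogeneous term $\dot X\, T^{\rm r}_x$ is bounded in $L_2$ by $C|\dot X|$ via \eqref{eq:Trx_bound}.

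\textbf{Boundary input.} Next we rewrite the boundary input in terms of the error states. Using $E-E_{\rm r}$ from \eqref{eq:energy-def}--\eqref{eq:energy_ref-def} and a change of variables, we get
\[
E - E_{\rm r} = \int_0^{s} u\,dx + \frac{k}{\beta}(\varepsilon \dot X + X) + \mathcal{R}(X,t),
\]
where $\mathcal{R} = \int_{-X}^{0} (T^{\rm r}(y,t)-T_{\rm m})\,dy$ satisfies $|\mathcal{R}|\le C|X|$ by \eqref{eq:Tr_bound}. Similarly, $q_{\rm c}^{\rm r}(t) + kT^{\rm r}_x(-X,t)$ is bounded by $C|X|$ using the mean value theorem and a bound on $T^{\rm r}_{xx}$, obtained from \eqref{eq:series_bound} in the same way as \eqref{eq:Trx_bound}. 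So the boundary condition at $x=0$ reads
\[
-ku_x(0,t) = -c\Big(\int_0^s u\,dx + \tfrac{k}{\beta}(\varepsilon\dot X + X)\Big) + \mathcal{O}(|X|),
\]
a linear feedback plus a perturbation. We then apply the backstepping transformation of \cite{Shumon19journal,koga2025cdc} adapted to the second-order interface,
\[
w(x,t) = u(x,t) - \int_x^{s(t)} k(x,y)\,u(y,t)\,dy - \phi(x-s(t))\,X(t) - \psi(x-s(t))\,\dot X(t).
\]
The kernels $\phi,\psi$ are chosen so that the target system is $w_t = \alpha w_{xx} + (\text{terms} \propto \dot s,\ \dot X\,T^{\rm r}_x)$, with $w(s,t)=0$, the ODE $\varepsilon \ddot X = -\dot X - cX + \beta w_x(s,t)$ (or similar), and a Robin/Neumann condition at $x=0$ in which the residual mismatch between the energy-shaping law and the exact backstepping law enters as terms multiplied by $c$.

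\textbf{Lyapunov analysis and main obstacle.} We will use
\[
V = \tfrac12\|w\|^2 + \tfrac{p}{2}\|w_x\|^2 + \tfrac{a}{2}\big(\varepsilon \dot X^2 + cX^2\big) + \text{cross term } \gamma X\dot X .
\]
The $\dot s\,w^2(0)$ type terms are handled using $\dot s\ge0$ and the bound $s\le\bar s_{\rm r}$ from Proposition~\ref{prop:safety}; this is exactly where safety is needed. Using Poincar\'e, Agmon and Young inequalities we aim for
\[
\dot V \le -bV + \text{(perturbations)}\,V .
\]
The perturbations are of size $C(\|T^{\rm r}_x\|_\infty,\|T^{\rm r}_{xx}\|_\infty)/c$ or small relative to $c$, so choosing $c\ge c^*$ absorbs them. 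Here $\dot s$ appears multiplicatively but is bounded, since $\varepsilon\dot s + s$ is controlled by the bounded energy. Finally, invertibility of the transformation gives equivalence of $V$ with $\Phi$ (plus the gradient norm, treated as in \cite{Shumon19journal}), yielding $\Phi(t)\le \bar M\Phi(0)e^{-bt}$.

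The main obstacle is this last step. The energy-shaping law is not the exact backstepping law, so one must show the mismatch and the reference-induced term $\dot X\, T^{\rm r}_x$ are dominated. We will first try to show that they scale as $1/c$ after normalizing the ODE by $c$, so that large gain wins.
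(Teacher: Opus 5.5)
Your error system and the bound on $q_{\rm c}^{\rm r}+kT^{\rm r}_x(-X,t)$ agree with the paper. The Lyapunov step, however, cannot deliver the estimate as stated.

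\textbf{The gap: you only get an $H^1$ bound.} With your target condition $w(s(t),t)=0$, differentiating the ODE part of $V$ (either $X^\top P X$ or your $\tfrac a2(\varepsilon\dot X^2+cX^2)+\gamma X\dot X$) produces a cross term $X\,w_x(s(t),t)$. No $L^2$ quantity can absorb this term. That is why you are forced to add $\tfrac p2\|w_x\|^2$ and use Agmon and $\|w_{xx}\|^2$ dissipation, as in \cite{Shumon19journal}. But then $V$ is equivalent to $\Phi+\|u_x\|^2$, not to $\Phi$. What you actually obtain is $\Phi(t)\le \bar M\big(\Phi(0)+\|u_x(\cdot,0)\|^2\big)e^{-bt}$, an $H^1$-type result. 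Your claim that invertibility of the transformation makes $V$ equivalent to $\Phi$ is false for this $V$. So the theorem's bound, with only $\Phi(0)$ on the right, does not follow.

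\textbf{The missing idea: a nonzero Dirichlet value at the interface.} The paper stays in $L^2$ by not imposing $w(s,t)=0$.
\begin{itemize}
\item It uses the kernel $\phi(x)=Kx-D$, which yields $w(s(t),t)=D^\top X$, and the functional $V=\frac12\|w\|^2+X^\top PX$ with $P(A+BK)+(A+BK)^\top P=-Q$.
\item The boundary term $\alpha D^\top X\,w_x(s,t)$ coming from $\frac{d}{dt}\frac12\|w\|^2$ cancels $2X^\top PB\,w_x(s,t)$ exactly when $D=-\frac{2}{\alpha}PB$.
\item The price is that the energy-shaping law equals the backstepping law only when $D=0$. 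For $D\neq0$ a mismatch $\frac{D^\top B}{\alpha}g(w,X)$ appears in $w_x(0,t)$.
\item The explicit solution gives $|D|=O(\lambda/c)$. One therefore fixes $\lambda\ge 4M_2$ first, then takes $c\ge c^*$ so that $M_1|D|<\frac18$ and $M_3|D|<M_2$.
\end{itemize}

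This also shows that your identification of the main obstacle is off. In your own setup ($w(s,t)=0$, i.e.\ $D=0$, with $\psi$ absorbed into the vector kernel), the energy-shaping law already coincides with the backstepping law up to reference-induced terms. There is no gain-mismatch to dominate. The real obstacle is the uncancelled $X\,w_x(s,t)$ cross term.

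\textbf{A smaller point.} The term $M_4\dot s\,V$ is not handled by boundedness of $\dot s$. It is handled by integration, $V(t)\le V(0)e^{M_4(s(t)-s_0)}e^{-M_5t}$, using $\dot s\ge0$ and $s\le\bar s_{\rm r}$.
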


Note that the integral term in $\Phi(t)$ measures the error between the system temperature and a reference profile that is spatially shifted to align with the current interface position $s(t)$. The remainder is dedicated to the proof of this theorem.

\subsection{Reference Error and Target Systems}

Let the tracking error states be defined as $
\tilde s(t) := s(t) - s_{\rm r}(t), \hspace{1mm} X(t) := \begin{bmatrix} \tilde s(t) \dot{\tilde{s}}(t) \end{bmatrix}^\top, \hspace{1mm}
u(x,t) := T(x,t) - T^r(x - \tilde s(t), t), \hspace{1mm} \forall x \in (0, s(t))$.
Subtracting the reference system \eqref{eq:reference_PDE}--\eqref{eq:reference_ODE} from the Stefan system \eqref{eq:stefanPDE}--\eqref{eq:stefanODE} yields:
\begin{align} \label{eq:u-PDE}
u_t &= \alpha u_{xx} + e_2^\top X T^r_x(x- e_1^\top X,t), \\
\label{eq:u-BC1} u_x(0,t) & = - k^{-1} q_{\rm c}(t) - T^r_x(- e_1^\top X, t), \\
\label{eq:u-BC2} u(s(t),t) & = 0, \\
\label{eq:u-ODE} \dot X(t) & = AX(t) + B u_x(s(t),t),
\end{align}
where $e_1 = [1, 0]^\top, e_2 = [0, 1]^\top$, and the matrices $A$ and $B$ are defined by $
    A = \left [
    \begin{array}{cc}
       0  & 1 \\
       0  & - \varepsilon^{-1}
    \end{array}
    \right] , \quad 
    B =\left [
    \begin{array}{c}
       0  \\
       - \varepsilon^{-1} \beta
    \end{array}
    \right]  . 
$

We now introduce the following backstepping transformation to map the error state $u(x,t)$ to a target state $w(x,t)$:
$w(x,t)  = u(x,t) +  \frac{1}{\alpha}  \int_x^{s(t)} \phi(x-y)^\top B u(y,t) dy
 - \phi^\top (x-s(t)) X(t)$, 
where 
$\phi(x) = Kx - D,
K^\top = \frac{c}{\beta} \begin{bmatrix} 1 & \varepsilon \end{bmatrix},$
and $D \in \bbR^2$ is to be determined. 
Note that $A + BK$ is a Hurwitz matrix for any control gain $c>0$. Applying the transformation to the reference error system \eqref{eq:u-PDE}--\eqref{eq:u-ODE} results in the target system for the $(w,X)$ states, which is governed by the following dynamics:
\begin{align}
w_t &= \alpha w_{xx} + D^\top A X(t) + e_2^\top X I(T^r_x(x- e_1^\top X,t)) \notag\\
& \quad + \dot s(t) K^\top X(t), \label{eq:target_PDE} \\
w_x(0,t) & = \frac{D^\top B}{\alpha} g(w(x,t), X(t)) + d(e_1^\top X), \label{eq:target_BC1} \\
w(s(t),t) & = D^\top X(t), \label{eq:target_BC2}\\
\dot X(t) & = (A + BK) X(t) + B w_x(s(t),t). \label{eq:target_ODE}
\end{align}
and $d$, $I$ are defined by $I(f(x,t)) = f(x,t) + \frac{1}{\alpha}  \int_x^{s(t)} \phi(x-y)^\top B f(y,t) dy, $ $
d(e_1^\top X) = - k^{-1} q_{\rm c}^r(t) - T^r_x(- e_1^\top X, t)$, 
and $g(w(x,t), X(t))$ is derived via the inverse transformation provided in \cite{koga2021towards} (see eq. (35)-(37) therein). 
Substituting the feedforward input \eqref{eq:feedforward} and the reference solution \eqref{eq:feedforward_series} into $d(e_1^\top X)$ 
and applying the results of Proposition \eqref{prop:reference_bound} leads to $
    | d(e_1^\top X) | 
   \leq \frac{2 M F G R}{ \left(R  -  F \bar s_{\rm r}  \right)^3} | X |  $.

\subsection{Lyapunov Analysis}

We consider the following Lyapunov function candidate $
V(t) = \frac{1}{2} \int_0^{s(t)} w(x,t)^2 dx  +  X(t)^\top P X(t) $,
where $P \in \mathbb{R}^{2 \times 2}$ is a symmetric positive definite matrix satisfying the Lyapunov equation $
P(A + BK)  + (A + BK)^\top P = - Q$,
for a given positive semidefinite matrix $Q = \lambda \diag(1, \kappa)$ with $\lambda, \kappa \geq 0$. 
Taking the time derivative of $V$ along with \eqref{eq:target_PDE}--\eqref{eq:target_ODE}
owns a cross-term $X w_x(s(t),t)$, which can be canceled by choosing the vector $D$ as $
D = - \frac{2}{\alpha} P B $.
As the solution to the Lyapunov equation can be explicitly obtained with the matrix $Q = \lambda \diag(1, \kappa)$ and $(A, B, K)$, an explicit form of $D$ can be given as $
    D = \lambda \beta \left [
    \begin{array}{cc}
        \frac{1}{ c}  &
     \frac{\varepsilon}{ c (1 + c \varepsilon)} + \frac{\kappa}{ 1 + c \varepsilon}
    \end{array}
    \right]^\top  $.  
Applying Young's, Cauchy-Schwarz, and Poincaré's inequalities to all the terms except for $ \int_0^{s(t)} w_x(x,t)^2 dx$ and $X^\top Q X$ in the time derivative of the Lyapunov function, one can show that there exist positive constants $M_i$ for $i \in \{1, 2, 3, 4\}$ that are independent on parameters $(\lambda, c)$ such that $
\dot V \leq  - \alpha \left(\frac{1}{4} - M_1 |D| \right) \int_0^{s(t)} w_x(x,t)^2 dx - (\lambda - M_2 - M_3 |D| ) |X|^2 + M_4 \dot s(t) V$ holds.
Therefore, by first choosing $\lambda \geq 4 M_2$ and then selecting a sufficiently large gain $c \geq c^*$ to make $|D|$ small enough satisfying $|D| < \min \left\{ \frac{1}{8M_1},  \frac{M_2}{M_3} \right\}$, we can ensure that the first two terms in the inequality of $\dot V$ are negative definite. This guarantees the existence of a constant $M_5>0$ such that $
\dot V \leq - M_5 V + M_4 \dot s(t) V$.
The final term $M_4 \dot s(t) V$ in is handled by applying the conditions $\dot s(t) \geq 0$ and $s_0<s(t) < L$ shown in Proposition \ref{prop:safety}, similarly to the procedure established in \cite{Shumon19journal}, which leads to the exponential decay of $V(t)$. By norm equivalence between $(w,X)$ and $(u,X)$, the exponential decay of $\Phi(t)$ is proven, which completes the proof of Theorem \ref{thm:stability}.

\section{SIMULATION} \label{sec:results}

\begin{figure*}[t]
\centering 
\subfloat[The interface position.]
{\includegraphics[width=0.24 \linewidth]{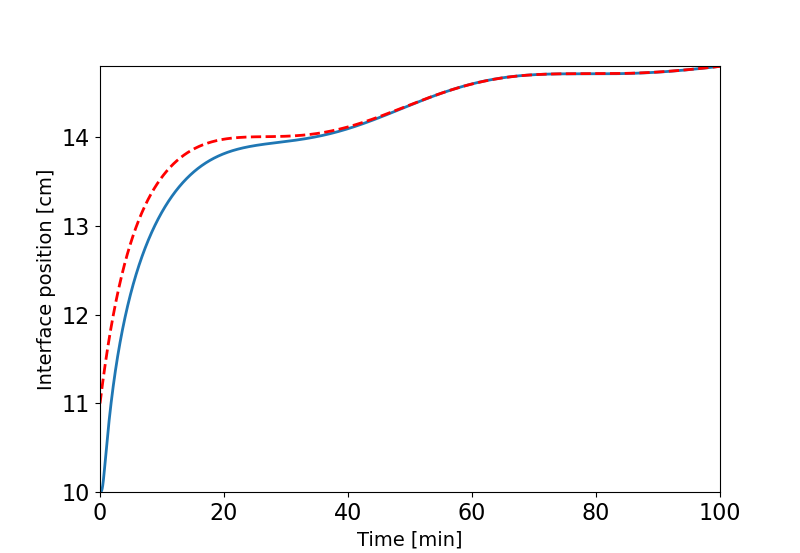}\label{fig:interface}} 
\subfloat[The boundary heat flux input. ]
{\includegraphics[width=0.24 \linewidth]{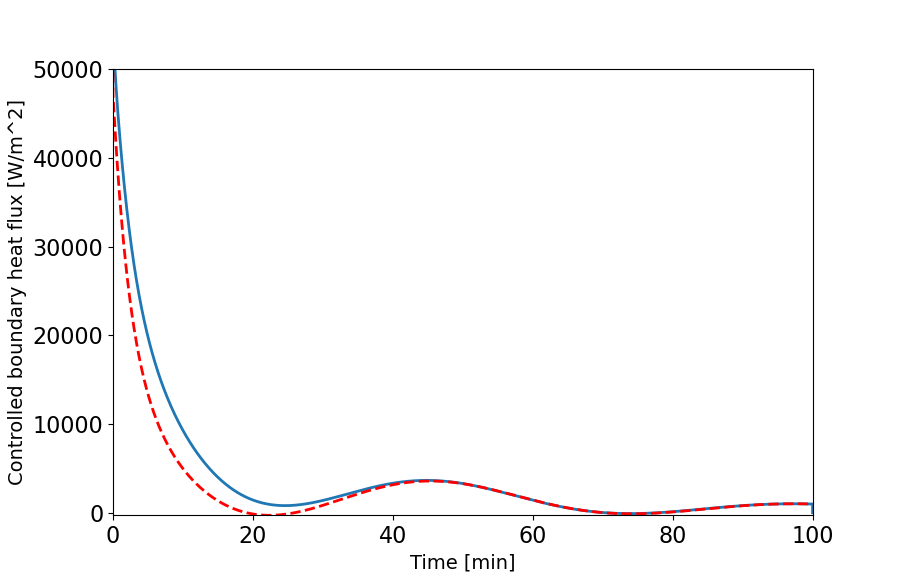}\label{fig:qc}} 
\hspace{1mm}
\subfloat[The temperature surface plots of tracking control and reference.]
{\includegraphics[width=0.5 \linewidth ]{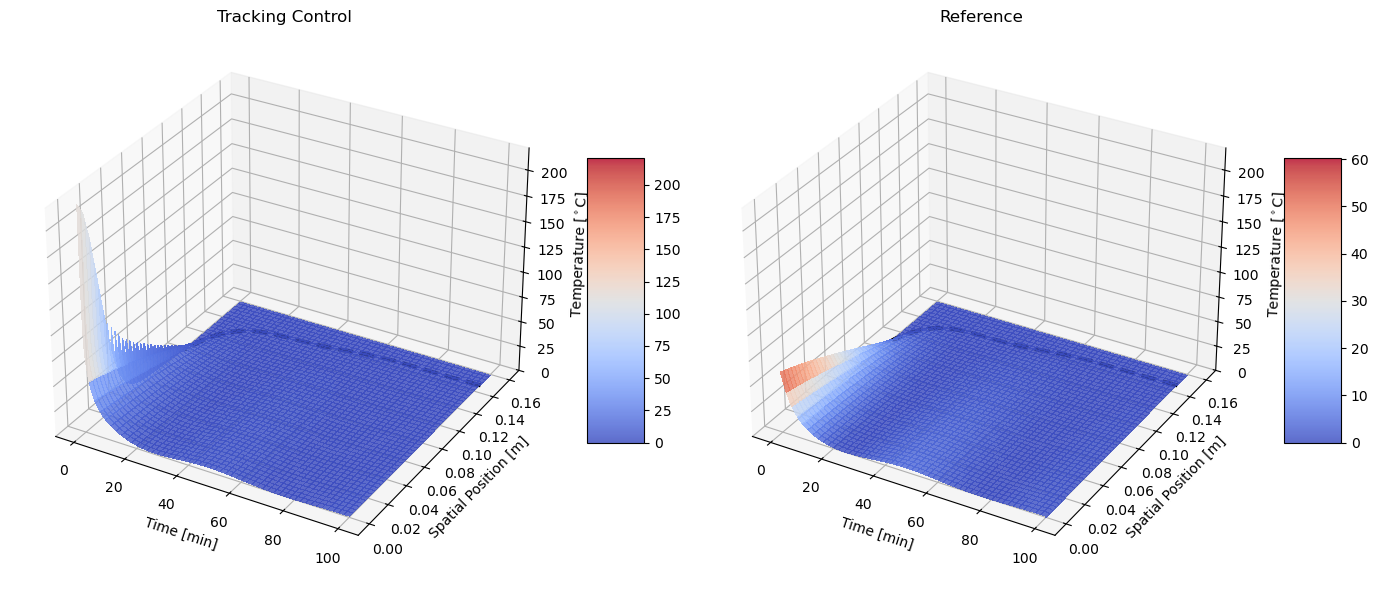}\label{fig:T0}}\\
\subfloat[The snapshots of the temperature profile at chosen time.]
{\includegraphics[width=\linewidth]{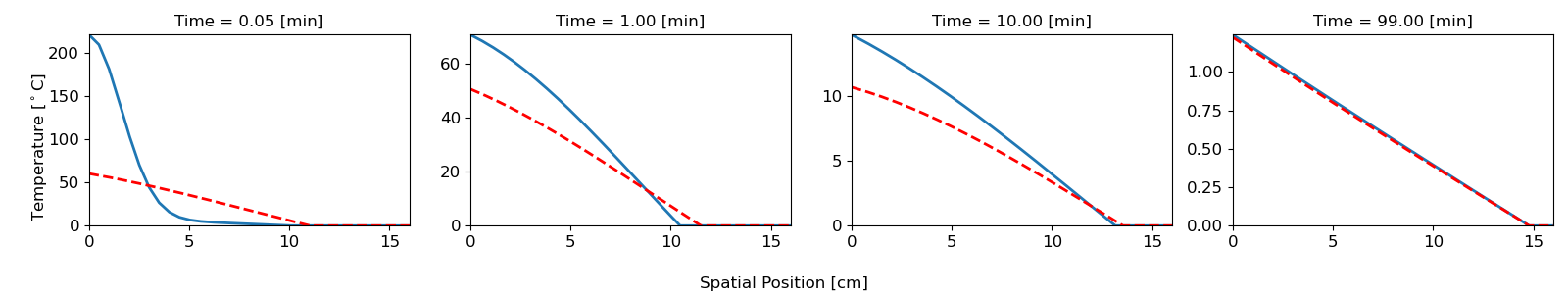}\label{fig:snapshots}} 
\caption{ The closed-loop response of \eqref{eq:stefanPDE}--\eqref{eq:stefanODE} with the trajectory tracking control \eqref{eq:tracking_control}, \eqref{eq:energy-def}, \eqref{eq:feedforward} with the reference trajectory \eqref{eq:sim_sref}. The plots are given for tracking control (blue solid) and its reference (red dash). }
\label{fig:response}
\end{figure*}

This section studies numerical simulation. For physical constants, we use the parameters for Zinc from \cite{koga2025cdc}, and the melting temperature is set to $T_{\rm m} = 0$ [°C] without loss of generality. 
The other parameters and initial conditions are set to $\varepsilon = 10 $ [s],  $s_0 = 0.1$ [m], $T_{0}(x) = 10 (1 - x / s_0)$ [°C], and $c = 0.002$ [1/s]. 
For the reference trajectory, we set
\begin{align}\label{eq:sim_vref}
    \dot s_{\rm r}(t) & = A (1 + \cos(\omega t)) e^{- \delta_1 t} + v_{\rm min} e^{- \delta_2 t},
\end{align}
where $A>0$ is to be determined, and $\omega, \delta_1, \delta_2, v_{\rm min}$ are user-chosen positive constants. This form is motivated by the need for a non-trivial sinusoidal behavior while ensuring that Assumption \ref{ass:reference_trajectory} holds (i.e., $\dot s_{\rm r}(t) \geq 0$ and $\lim_{t \to \infty} \dot s_{\rm r}(t) = 0$). Moreover, adding $v_{\rm min} e^{- \delta_2 t}$ helps prevent the feedforward control from demanding excessive cooling as $\dot s_{\rm r}(t)$ approaches zero. Integrating \eqref{eq:sim_vref} with respect to time provides 
\begin{align}
s_{\rm r}(t) & = A  \left( \frac{\omega \sin(\omega t) e^{-\delta_1 t} - \delta_1 (\cos(\omega t) e^{-\delta_1 t} - 1)}{\delta_1^2 + \omega^2} \right) \notag\\
& \quad + \frac{A}{\delta_1} (1 - e^{-\delta_1 t})  + \frac{v_{\rm min}}{\delta_2} ( 1 - e^{- \delta_2 t}) + s_{\rm r}(0). \label{eq:sim_sref}
\end{align}
By taking the limit $t \to \infty$ and setting $s_r(\infty) = \bar{s}_r$, we have $
    A = \frac{\bar s_{\rm r} - s_{\rm r}(0) - \frac{v_{\rm min}}{\delta_2}}{\frac{\delta_1}{\delta_1^2 + \omega^2} + \frac{1}{\delta_1}}$.
The simulation parameters are chosen to ensure $A>0$, with specific values
$\omega = 0.002$ [rad/s], $v_{\rm min} = 7.0 \times 10^{-7}$ [m/s], $\delta_1 = 4.0 \times 10^{-4}$ [/s], $\delta_2 = 4.0 \times 10^{-3}$ [/s], $s_{\rm r}(0) = 0.11$ [m], and $\bar s_{\rm r} = 0.15$ [m]. 

The simulation results are shown in Fig. \ref{fig:response}. In all plots, the response of the closed-loop system is shown as a blue solid line, while the reference is a red dashed line. Fig. \ref{fig:interface} shows the temporal evolution of the interface position. The reference trajectory increases monotonically but with a varying growth rate due to the sinusoidal term. The controlled interface position successfully converges to the reference trajectory within 60 minutes.
Figs. \ref{fig:qc} and \ref{fig:T0} provide the temporal plot of the boundary heat flux and a 3D surface plot of the spatiotemporal temperature profile, respectively. As shown in Fig. \ref{fig:qc}, the reference heat flux is non-monotonic and even becomes slightly negative at some time, which corresponds physically to cooling. In contrast, the actual heat flux applied by the tracking controller (blue solid line) remains non-negative, a direct consequence of the safety guarantees provided by Assumption \ref{ass:energy}, even while tracking the non-monotonic reference. Despite the negative reference input, the reference temperature profile shown in Fig. \ref{fig:T0} always maintains non-negative values, satisfying the condition in Assumption \ref{ass:reference_temp}. The closed-loop system also satisfies the safety constraint \eqref{eq:temp-valid}, with the temperature remaining above the melting point for all time.
Finally, Fig. \ref{fig:snapshots} displays snapshots of the spatial temperature profile at selected time instances ($t = 0.05, 1, 10, 99$ [min]). Because the initial internal energy of the system is lower than that of the reference, the controller initially applies a large heat flux. This causes the temperature to rise rapidly, briefly overshooting the reference profile right after the initial time, here depicted at $t=0.05$ [min]. By $t=1$ [min], the controller has reduced the heat flux, and the temperature profile is much closer to the reference. As time progresses, the tracking error for both temperature and interface position continues to decrease. At the final time of $t=99$ [min], the system state is nearly identical to the reference state. Overall, the results in Fig. \ref{fig:response} are consistent with the theoretical results proven in this paper and demonstrate successful tracking performance for a non-trivial sinusoidal reference trajectory.





\bibliographystyle{cls/IEEEtran.bst}
\bibliography{bib/IEEEabrv.bib,bib/ref.bib}

@article{srisuma2025simulation,
  title={Simulation-based approach for fast optimal control of a Stefan problem with application to cell therapy},
  author={Srisuma, Prakitr and Barbastathis, George and Braatz, Richard D},
  journal={Automatica},
  volume={179},
  pages={112398},
  year={2025},
  publisher={Elsevier}
}

@article{belhamadia2023numerical,
  title={Numerical modelling of hyperbolic phase change problems: Application to continuous casting},
  author={Belhamadia, Youssef and Cassol, Guilherme Ozorio and Dubljevic, Stevan},
  journal={International Journal of Heat and Mass Transfer},
  volume={209},
  pages={124042},
  year={2023},
  publisher={Elsevier}
}

@article{francis2025spatial,
  title={Spatial modeling algorithms for reactions and transport in biological cells},
  author={Francis, Emmet A and Laughlin, Justin G and Dokken, J{\o}rgen S and Finsberg, Henrik NT and Lee, Christopher T and Rognes, Marie E and Rangamani, Padmini},
  journal={Nature Computational Science},
  volume={5},
  number={1},
  pages={76--89},
  year={2025},
  publisher={Nature Publishing Group US New York}
}

@article{nwankwo2024deep,
  title={Deep learning and American options via free boundary framework},
  author={Nwankwo, Chinonso and Umeorah, Nneka and Ware, Tony and Dai, Weizhong},
  journal={Computational Economics},
  volume={64},
  number={2},
  pages={979--1022},
  year={2024},
  publisher={Springer}
}

@article{petrus2022solid,
  title={Solid boundary output feedback control of the stefan problem: The enthalpy approach},
  author={Petrus, Bryan and Chen, Zhelin and El-Kebir, Hamza and Bentsman, Joseph and Thomas, Brian G},
  journal={IEEE transactions on automatic control},
  volume={68},
  number={6},
  pages={3485--3500},
  year={2022},
  publisher={IEEE}
}

@article{koga2020energy,
  title={Energy storage in paraffin: A {PDE} backstepping experiment},
  author={Koga, Shumon and Makihata, Mitsutoshi and Chen, Renkun and Krstic, Miroslav and Pisano, Albert P},
  journal={IEEE Transactions on Control Systems Technology},
  volume={29},
  number={4},
  pages={1490--1502},
  year={2020},
  publisher={IEEE}
}

@article{vazquez2026backstepping,
  title={Backstepping for partial differential equations: A survey},
  author={Vazquez, Rafael and Auriol, Jean and Bribiesca-Argomedo, Federico and Krstic, Miroslav},
  journal={Automatica},
  volume={183},
  pages={112572},
  year={2026},
  publisher={Elsevier}
}

@article{karafyllis2022spill,
  title={Spill-free transfer and stabilization of viscous liquid},
  author={Karafyllis, Iasson and Krstic, Miroslav},
  journal={IEEE Transactions on Automatic Control},
  volume={67},
  number={9},
  pages={4585--4597},
  year={2022},
  publisher={IEEE}
}

@article{roy2024input,
  title={An Input-to-State Safety Approach Toward Safe Control of a Class of Parabolic {PDEs} Under Disturbances},
  author={Roy, Tanushree and Knichel, Ashley and Dey, Satadru},
  journal={IEEE Transactions on Control Systems Technology},
  volume={32},
  number={5},
  pages={1936--1943},
  year={2024},
  publisher={IEEE}
}

@article{bhan2023neural,
  title={Neural operators for bypassing gain and control computations in PDE backstepping},
  author={Bhan, Luke and Shi, Yuanyuan and Krstic, Miroslav},
  journal={IEEE Transactions on Automatic Control},
  volume={69},
  number={8},
  pages={5310--5325},
  year={2023},
  publisher={IEEE}
}

@article{hu2025safe,
  title={Safe PDE Boundary Control with Neural Operators},
  author={Hu, Hanjiang and Liu, Changliu},
  journal={7th Annual Learning for Dynamics \& Control Conference},
  year={2025}
}

@article{ecklebe2021model,
  title={Model predictive control of the vertical gradient freeze crystal growth process},
  author={Ecklebe, Stefan and Buchwald, Tom and R{\"u}diger, Patrick and Winkler, Jan},
  journal={IFAC-PapersOnLine},
  volume={54},
  number={6},
  pages={218--225},
  year={2021},
  publisher={Elsevier}
}

@article{ran2024heat,
  title={Heat conduction in live tissue during radiofrequency electrosurgery},
  author={Ran, Junren and El-Kebir, Hamza and Lee, Yongseok and Chamorro, Leonardo P and Berlin, Richard and Aguiluz Cornejo, Gabriela M and Benedetti, Enrico and Giulianotti, Pier C and Bhargava, Rohit and Bentsman, Joseph and others},
  journal={Journal of the Royal Society Interface},
  volume={21},
  number={210},
  pages={20230420},
  year={2024},
  publisher={The Royal Society}
}

@article{si2022coupled,
  title={Coupled ocean--sea ice dynamics of the Antarctic Slope Current driven by topographic eddy suppression and sea ice momentum redistribution},
  author={Si, Yidongfang and Stewart, Andrew L and Eisenman, Ian},
  journal={Journal of Physical Oceanography},
  volume={52},
  number={7},
  pages={1563--1589},
  year={2022}
}

@article{koga2023safe,
  title={Safe {PDE} backstepping {QP} control with high relative degree {CBFs}: Stefan model with actuator dynamics},
  author={Koga, Shumon and Krstic, Miroslav},
  journal={IEEE Transactions on Automatic Control},
  volume={68},
  number={12},
  pages={7195--7208},
  year={2023},
  publisher={IEEE}
}

@inproceedings{wang2024safe,
  title={Safe Control of Hyperbolic {PDE-ODE} Cascades},
  author={Wang, Ji and Krstic, Miroslav},
  booktitle={2024 American Control Conference (ACC)},
  pages={2533--2538},
  year={2024},
  organization={IEEE}
}

@inproceedings{koga2023event,
  title={Event-triggered safe stabilizing boundary control for the {Stefan} {PDE} system with actuator dynamics},
  author={Koga, Shumon and Demir, Cenk and Krstic, Miroslav},
  booktitle={2023 American Control Conference (ACC)},
  pages={1794--1799},
  year={2023},
  organization={IEEE}
}

@inproceedings{buisson2018control,
  title={Control of piston position in inviscid gas by bilateral boundary actuation},
  author={Buisson-Fenet, Mona and Koga, Shumon and Krstic, Miroslav},
  booktitle={2018 IEEE Conference on Decision and Control (CDC)},
  pages={5622--5627},
  year={2018},
  organization={IEEE}
}

@inproceedings{koga2025cdc,
  title={Safe Stabilization of the {Stefan} Problem with a High-Order Moving Boundary Dynamics by PDE Backstepping},
  author={Koga, Shumon and Krstic, Miroslav},
  booktitle={2025 IEEE Conference on Decision and Control (CDC)},
  pages={},
  year={2025},
  organization={IEEE}
}

@article{ecklebe2021toward,
  title={Toward Model-Based Control of the Vertical Gradient Freeze Crystal Growth Process},
  author={Ecklebe, S. and Woittennek, F. and Frank-Rotsch, C. and Dropka, N. and Winkler, J.},
  journal={IEEE Transactions on Control Systems Technology},
  volume={30},
  number={1},
  pages={384--391},
  year={2021},
  publisher={IEEE}
}

@article{agrawal2024gatekeeper,
  title={gatekeeper: Online safety verification and control for nonlinear systems in dynamic environments},
  author={Agrawal, Devansh Ramgopal and Chen, Ruichang and Panagou, Dimitra},
  journal={IEEE Transactions on Robotics},
  year={2024},
  publisher={IEEE}
}

@article{yu2020bilateral,
  title={Bilateral boundary control of moving shockwave in {LWR} model of congested traffic},
  author={Yu, Huan and Diagne, Mamadou and Zhang, Liguo and Krstic, Miroslav},
  journal={IEEE Transactions on Automatic Control},
  volume={66},
  number={3},
  pages={1429--1436},
  year={2020},
  publisher={IEEE}
}

@article{koga2021towards,
  title={Towards implementation of {PDE} control for {Stefan} system: Input-to-state stability and sampled-data design},
  author={Koga, S. and Karafyllis, I. and Krstic, M.},
  journal={Automatica},
  volume={127},
  pages={109538},
  year={2021},
  publisher={Elsevier}
}

@article{ames2016control,
  title={Control barrier function based quadratic programs for safety critical systems},
  author={Ames, A. D. and Xu, X. and Grizzle, J. W. and Tabuada, P.},
  journal={IEEE Transactions on Automatic Control},
  volume={62},
  number={8},
  pages={3861--3876},
  year={2016},
  publisher={IEEE}
}

@article{capelli2022passivity,
  title={Passivity and control barrier functions: Optimizing the use of energy},
  author={Capelli, Beatrice and Secchi, Cristian and Sabattini, Lorenzo},
  journal={IEEE Robotics and Automation Letters},
  volume={7},
  number={2},
  pages={1356--1363},
  year={2022},
  publisher={IEEE}
}

@article{ortega2002putting,
  title={Putting energy back in control},
  author={Ortega, Romeo and Van Der Schaft, Arjan J and Mareels, Iven and Maschke, Bernhard},
  journal={IEEE Control Systems Magazine},
  volume={21},
  number={2},
  pages={18--33},
  year={2002},
  publisher={IEEE}
}

@article{califano2023passivity,
  title={Passivity-preserving safety-critical control using control barrier functions},
  author={Califano, Federico},
  journal={IEEE Control Systems Letters},
  volume={7},
  pages={1742--1747},
  year={2023},
  publisher={IEEE}
}

@article{singletary2021safety,
  title={Safety-critical kinematic control of robotic systems},
  author={Singletary, Andrew and Kolathaya, Shishir and Ames, Aaron D},
  journal={IEEE Control Systems Letters},
  volume={6},
  pages={139--144},
  year={2021},
  publisher={IEEE}
}

@article{galvao2022extremum,
  title={Extremum seeking for {Stefan} {PDE} with moving boundary and delays},
  author={Galv{\~a}o, Maur{\'\i}cio Linhares and Oliveira, Tiago Roux and Krsti{\'c}, Miroslav},
  journal={IFAC-PapersOnLine},
  volume={55},
  number={36},
  pages={222--227},
  year={2022},
  publisher={Elsevier}
}

@article{yang2018output,
  title={Output consensus of multiagent systems based on {PDEs} with input constraint: A boundary control approach},
  author={Yang, Chengdong and Huang, Tingwen and Zhang, Ancai and Qiu, Jianlong and Cao, Jinde and Alsaadi, Fuad E},
  journal={IEEE Transactions on Systems, Man, and Cybernetics: Systems},
  volume={51},
  number={1},
  pages={370--377},
  year={2018},
  publisher={IEEE}
}

@article{mironchenko2020local,
  title={Local stabilization of an unstable parabolic equation via saturated controls},
  author={Mironchenko, Andrii and Prieur, Christophe and Wirth, Fabian},
  journal={IEEE Transactions on Automatic Control},
  volume={66},
  number={5},
  pages={2162--2176},
  year={2020},
  publisher={IEEE}
}

@article{dubljevic2006predictive,
  title={Predictive control of parabolic {PDEs} with state and control constraints},
  author={Dubljevic, Stevan and El-Farra, Nael H and Mhaskar, Prashant and Christofides, Panagiotis D},
  journal={International Journal of Robust and Nonlinear Control: IFAC-Affiliated Journal},
  volume={16},
  number={16},
  pages={749--772},
  year={2006},
  publisher={Wiley Online Library}
}

@article{Shumon19journal,
  title={Control and state estimation of the one-phase {Stefan} problem via backstepping design},
  author={Koga, S. and Diagne, M. and Krstic, M.},
  journal={IEEE Transactions on Automatic Control},
  volume={64},
  number={2},
  pages={510--525},
  year={2018},
  publisher={IEEE}
}

@book{KKbook2021,
  title={Materials Phase Change {PDE} Control and Estimation: From Additive Manufacturing to Polar Ice},
  author={Koga, S. and Krstic, M.},
  year={2020},
  publisher={Springer Nature}
}

@article{demir2024neuron,
  title={Neuron growth control and estimation by {PDE} backstepping},
  author={Demir, Cenk and Koga, Shumon and Krstic, Miroslav},
  journal={Automatica},
  volume={165},
  pages={111669},
  year={2024},
  publisher={Elsevier}
}

@article{wang2021deep,
  title={Deep learning of free boundary and {Stefan} problems},
  author={Wang, Sifan and Perdikaris, Paris},
  journal={Journal of Computational Physics},
  volume={428},
  pages={109914},
  year={2021},
  publisher={Elsevier}
}

@article{winter2023multi,
  title={Multi-fidelity Bayesian optimization to solve the inverse {Stefan} problem},
  author={Winter, JM and Abaidi, R and Kaiser, JWJ and Adami, S and Adams, NA},
  journal={Computer Methods in Applied Mechanics and Engineering},
  volume={410},
  pages={115946},
  year={2023},
  publisher={Elsevier}
}

@article{Hinze07,
  title={Optimal control of the free boundary in a two-phase {Stefan} problem},
  author={Hinze, Michael and Ziegenbalg, Stefan},
  journal={Journal of Computational Physics},
  volume={223},
  number={2},
  pages={657--684},
  year={2007},
  publisher={Elsevier}
}

@article{dunbar2003motion,
  title={Motion planning for a nonlinear {Stefan} problem},
  author={Dunbar, William B and Petit, Nicolas and Rouchon, Pierre and Martin, Philippe},
  journal={ESAIM: Control, Optimisation and Calculus of Variations},
  volume={9},
  pages={275--296},
  year={2003},
  publisher={EDP Sciences}
}

@article{maidi2014,
  title={Boundary geometric control of a linear stefan problem},
  author={Maidi, Ahmed and Corriou, Jean-Pierre},
  journal={Journal of Process Control},
  volume={24},
  number={6},
  pages={939--946},
  year={2014},
  publisher={Elsevier}
}

@inproceedings{petrus12,
  title={Enthalpy-based feedback control algorithms for the {Stefan} problem},
  author={Petrus, Bryan and Bentsman, Joseph and Thomas, Brian G},
  booktitle={2012 IEEE 51st IEEE Conference on Decision and Control (CDC)},
  pages={7037--7042},
  year={2012},
  organization={IEEE}
}

@article{wang2021closed,
  title={Closed-loop high-fidelity simulation integrating finite element modeling with feedback controls in additive manufacturing},
  author={Wang, Dan and Chen, Xu},
  journal={Journal of Dynamic Systems, Measurement, and Control},
  volume={143},
  number={2},
  pages={021006},
  year={2021},
  publisher={American Society of Mechanical Engineers}
}

@article{pozzato2024accelerating,
  title={Accelerating the transition to cobalt-free batteries: a hybrid model for {LiFePO4}/graphite chemistry},
  author={Pozzato, Gabriele and Li, Xueyan and Lee, Donghoon and Ko, Johan and Onori, Simona},
  journal={npj Computational Materials},
  volume={10},
  number={1},
  pages={14},
  year={2024},
  publisher={Nature Publishing Group UK London}
}

@book{gol2012inverse,
  title={Inverse Stefan Problems},
  author={Gol'dman, Natali{\^a} L'vovna},
  volume={412},
  year={2012},
  publisher={Springer Science \& Business Media}
}

\end{document}